\newtheorem{theorem}{Theorem}[section]
\newtheorem{corollary}[theorem]{Corollary}
\newtheorem{conjecture}[theorem]{Conjecture}
\theoremstyle{definition}
\begin{document}

\title{On the Partitions into\\ Distinct Parts and Odd Parts}
\author{Mircea Merca\\ \\
	\footnotesize Department of Mathematics, University of Craiova, Craiova, DJ 200585, Romania\\
	\footnotesize Academy of Romanian Scientists, Ilfov 3, Sector 5, Bucharest, Romania\\
	\footnotesize mircea.merca@profinfo.edu.ro
}
\date{}
\maketitle

\begin{abstract}
	In this paper, we show that the difference between the number of parts in the odd partitions of $n$ 
	and the number of parts in the distinct partitions of $n$ satisfies Euler's recurrence relation for the partition
	function $p(n)$ when $n$ is odd. A decomposition of this difference in terms of the total number of parts in all the partitions of $n$ is also derived.
	In this context, we conjecture that for $k>0$, the series
	$$
	(q^2;q^2)_\infty \sum_{n=k}^\infty \frac{q^{{k\choose 2}+(k+1)n}}{(q;q)_n}
	\begin{bmatrix}
	n-1\\k-1
	\end{bmatrix}
	$$
	has non-negative coefficients.
\\
\\
{\bf Keywords:} partitions, truncated theta series
\\
\\
{\bf MSC 2010:}  11P81, 05A17
\end{abstract}

\section{Introduction}

A partition of a positive integer $n$ is a sequence of positive integers whose sum is $n$. The order of the summands is unimportant when writing the partitions of $n$, but for consistency, a partition of $n$ will be written with the summands in a nonincreasing order \cite{Andrews76}. As usual, we denote by $p(n)$ the number of the partitions of $n$. For example, we have $p(5)=7$ because the partitions of $5$ are given as:
$$ 5,\ 4+1,\ 3+2,\ 3+1+1,\ 2+2+1,\ 2+1+1+1,\ 1+1+1+1+1.$$
The fastest algorithms for enumerating all the partitions of an integer have recently been presented by Merca \cite{Merca12,Merca13}.

One of the well-known theorems in the partition theory is Euler's pentagonal number theorem, i.e.,
$$\sum_{n=-\infty}^\infty (-1)^n q^{n(3n-1)/2} = (q;q)_\infty.$$
Here and throughout this paper, we use the following customary $q$-series notation:
$$(a;q)_0=1,\qquad (a;q)_n=\prod_{k=0}^{n-1}(1-aq^k),\qquad (a;q)_\infty=\prod_{k=0}^{\infty}(1-aq^k).$$
Because the infinite product $(a;q)_\infty$ diverges when $a\neq 0$ and 
$|q| \geqslant 1$, whenever $(a;q)_\infty$ appears in a formula, 
we shall assume that $|q| <1$.
Euler's pentagonal number theorem gives an easy linear recurrence relation for $p(n)$, namely
\begin{equation}\label{EPR}
\sum_{j=-\infty}^\infty (-1)^j p\big(n-j(3j-1)/2\big) = \delta_{0,n},
\end{equation}
where $\delta_{i,j}$ is the Kronecker delta function and $p(n)=0$ if $n<0$.

A famous theorem of Euler asserts that there are as many partitions of $n$ into distinct parts as there are partitions into odd parts \cite[p. 5. Cor. 1.2]{Andrews76}. For instance, the odd partitions of $5$ are:
 $$5,\quad 3+1+1\quad \text{and}\quad 1+1+1+1+1,$$ 
 while the distinct partitions of $5$ are: 
 $$5,\quad 4+1\quad \text{and}\quad 3+2.$$

We recall Euler's bijective proof of this result \cite{Glaisher}: A partition into distinct parts can be written as
$$n=d_1+d_2+\cdots+d_k.$$
Each integer $d_i$ can be uniquely expressed as a power of $2$ times an odd number, i.e.,
$$n=2^{\alpha_1} o_1 + 2^{\alpha_2} o_2 + \cdots + 2^{\alpha_k} o_k$$
where each $o_i$ is an odd number. Grouping together the odd numbers, we get the following expression
$$n=t_1\cdot 1 + t_3\cdot 3 + t_5\cdot 5 + \cdots,$$
where $t_i\geqslant 0$. 
If $d_i$ is odd, then we have $\alpha_i=0$. For $d_i$ even, it is clear that $\alpha_i>0$. So we deduce that
$$(t_1+t_3+t_5+\cdots)-k\geqslant 0,$$
for any positive integer $n$. In other words, the difference between the number of parts in the odd partitions of $n$ and the number of parts in the distinct partitions of $n$ is nonnegative. A combinatorial interpretation of this difference has been conjectured recently by George Beck \cite[A090867, Apr 22 2017]{Sloane}.

\begin{conjecture}
	The difference between the number of parts in the odd partitions of $n$ and the number of parts in the distinct partitions of $n$ equals the number of partitions of $n$ in which the set of even parts has only one element.
\end{conjecture}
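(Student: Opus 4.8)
The plan is to prove the conjecture by showing that the three quantities it involves all arise from a single $q$-series. Write $D(n)$ for the difference between the number of parts in the odd partitions of $n$ and the number of parts in the distinct partitions of $n$, and $E(n)$ for the number of partitions of $n$ whose even parts form a one-element set. I would establish that $\sum_{n\geqslant 0} D(n)\,q^n$ and $\sum_{n\geqslant 0} E(n)\,q^n$ are the same power series.

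First I would obtain the generating function for the \emph{total} number of parts in a family of partitions by the standard device of inserting a variable $z$ marking each part and then applying $\partial_z$ at $z=1$. For the odd partitions, starting from $\prod_{i\geqslant 1}(1-zq^{2i-1})^{-1}$ and differentiating its logarithm gives
$$\sum_{n\geqslant 0}\big(\text{no.\ of parts in odd partitions of }n\big)\,q^n=\frac{1}{(q;q^2)_\infty}\sum_{i\geqslant 1}\frac{q^{2i-1}}{1-q^{2i-1}}.$$
In the same way, $\prod_{i\geqslant 1}(1+zq^{i})$ yields
$$\sum_{n\geqslant 0}\big(\text{no.\ of parts in distinct partitions of }n\big)\,q^n=(-q;q)_\infty\sum_{i\geqslant 1}\frac{q^{i}}{1+q^{i}}.$$
By Euler's theorem $1/(q;q^2)_\infty=(-q;q)_\infty$, so the two series carry a common factor and
$$\sum_{n\geqslant 0}D(n)\,q^n=(-q;q)_\infty\left(\sum_{i\geqslant 1}\frac{q^{2i-1}}{1-q^{2i-1}}-\sum_{i\geqslant 1}\frac{q^{i}}{1+q^{i}}\right).$$

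The crux is a purely algebraic simplification of the bracketed Lambert-type series. Using the partial fraction $\dfrac{q^{i}}{1+q^{i}}=\dfrac{q^{i}}{1-q^{i}}-\dfrac{2q^{2i}}{1-q^{2i}}$ and splitting $\sum_{i\geqslant 1} q^{i}/(1-q^{i})$ into its odd-index and even-index parts, the odd-index contribution cancels the first sum and one is left with
$$\sum_{i\geqslant 1}\frac{q^{2i-1}}{1-q^{2i-1}}-\sum_{i\geqslant 1}\frac{q^{i}}{1+q^{i}}=\sum_{i\geqslant 1}\frac{q^{2i}}{1-q^{2i}},$$
so that $\sum_{n\geqslant 0}D(n)\,q^n=(-q;q)_\infty\sum_{i\geqslant 1}q^{2i}/(1-q^{2i})$. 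Finally I would write the generating function for $E(n)$ directly: such a partition consists of arbitrary odd parts, contributing $1/(q;q^2)_\infty$, together with one distinct even value $2i$ of multiplicity at least one, contributing $q^{2i}/(1-q^{2i})$ summed over $i$. Thus $\sum_{n\geqslant 0}E(n)\,q^n=\frac{1}{(q;q^2)_\infty}\sum_{i\geqslant 1}q^{2i}/(1-q^{2i})$, which equals the expression just found for $D(n)$ after Euler's theorem, proving the conjecture.

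I expect the main obstacle to lie not in any single computation but in assembling the pieces correctly: one must be careful that differentiating the $z$-marked product at $z=1$ returns the total part count rather than a partition count, and then perform the Lambert-series bookkeeping so that the odd-index terms cancel exactly. The one genuinely creative ingredient is the partial-fraction rewriting of $q^{i}/(1+q^{i})$, which converts the distinct-part series into a form directly comparable with the odd-part series; once that is in hand the identity is immediate.
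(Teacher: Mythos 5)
Your proof is correct and follows essentially the same route as the solution the paper relies on (Andrews's proof of Theorem \ref{T1}, whose key identity the paper quotes as equation \eqref{eq:GEAF}): you derive $\sum_{n\geqslant 0} b(n)q^n = (-q;q)_\infty \sum_{n\geqslant 1} q^{2n}/(1-q^{2n})$ by logarithmic differentiation of the $z$-marked products and the partial-fraction identity $q^{i}/(1+q^{i}) = q^{i}/(1-q^{i}) - 2q^{2i}/(1-q^{2i})$, then match this with the evident generating function $\frac{1}{(q;q^2)_\infty}\sum_{i\geqslant 1} q^{2i}/(1-q^{2i})$ for $a(n)$ via Euler's identity $(-q;q)_\infty = 1/(q;q^2)_\infty$. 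All steps check out, so no gaps.
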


A few days later, George E. Andrews \cite[Theorem 1]{Andrews17} provides a solution for this Beck's problem and introduces a new combinatorial interpretation for the difference between the number of parts in the odd partitions of $n$ and the number of parts in the distinct partitions of $n$.

\begin{theorem}\label{T1}
	For all $n\geqslant 1$, $a(n)=b(n)=c(n)$,
	where:
	\begin{enumerate}
		\item[-] $a(n)$ is the number of partitions of $n$ in which the set of even parts has only one element;
		\item[-] $b(n)$ is the difference between the number of parts in the odd partitions of $n$ and the number of parts in the distinct partitions of $n$;
		\item[-] $c(n)$ is the number of partitions of $n$ in which exactly one part is repeated.
	\end{enumerate}
\end{theorem}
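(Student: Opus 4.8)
The plan is to prove Theorem~\ref{T1} by establishing generating-function identities for each of the three quantities $a(n)$, $b(n)$, and $c(n)$, and then showing that all three coincide. The most transparent route is to compute a single closed-form generating function and verify that each combinatorial quantity produces it.

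First I would set up the generating functions. For $a(n)$, the number of partitions in which the set of even parts has exactly one element: choosing which even number $2k$ appears (at least once) contributes a factor summed over $k\geqslant 1$, while the odd parts are unrestricted. This gives
\begin{equation*}
\sum_{n=1}^\infty a(n)\,q^n
= \frac{1}{(q;q^2)_\infty}\sum_{k=1}^\infty \frac{q^{2k}}{1-q^{2k}},
\end{equation*}
where $1/(q;q^2)_\infty$ generates the odd parts and $q^{2k}/(1-q^{2k})$ generates one-or-more copies of the single even part $2k$. For $c(n)$, exactly one part repeated: one selects a part $j$ to occur at least twice while every other part occurs at most once; I would write this as a distinct-parts background $(-q;q)_\infty$ corrected so that exactly one part is forced to repeat, yielding a sum $\sum_{j\geqslant 1} q^{2j}/(1-q^j)$ against the distinct-parts factor, and then simplify.

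For $b(n)$ I would use that the generating function tracking the total number of parts in odd partitions is obtained by differentiating the odd-partition generating function with respect to a parts-counting variable $z$, evaluated at $z=1$; concretely $\sum_n (\text{odd parts count})\,q^n = \frac{1}{(q;q^2)_\infty}\sum_{k\geqslant 0}\frac{q^{2k+1}}{1-q^{2k+1}}$, and similarly for distinct partitions the number of parts is generated by $(-q;q)_\infty\sum_{k\geqslant 1}\frac{q^k}{1+q^k}$. Taking the difference and invoking Euler's identity $(-q;q)_\infty=1/(q;q^2)_\infty$ to put both over the same product, the claim reduces to the purely $q$-series identity
\begin{equation*}
\sum_{k\geqslant 0}\frac{q^{2k+1}}{1-q^{2k+1}}-\sum_{k\geqslant 1}\frac{q^{k}}{1+q^{k}}
=\sum_{k\geqslant 1}\frac{q^{2k}}{1-q^{2k}},
\end{equation*}
which I would verify by splitting the second sum into even and odd indices and using the Lambert-series manipulation $\frac{q^k}{1+q^k}=\frac{q^k}{1-q^k}-\frac{2q^{2k}}{1-q^{2k}}$.

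The main obstacle I expect is bookkeeping in the $b(n)$ computation: correctly passing from a counting statistic on parts to the logarithmic-derivative form of the generating function, and then matching the resulting Lambert series against the $a(n)$ expression without sign or index errors. Once the three generating functions are reduced to Lambert series over the common factor $1/(q;q^2)_\infty$, the equality $a(n)=b(n)=c(n)$ follows by comparing coefficients, so the crux is the elementary but delicate rearrangement of those Lambert series rather than any deep structural input.
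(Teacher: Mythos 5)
Your proposal is correct and follows essentially the same route as the paper, which invokes (via Andrews) the equality of the generating functions, in particular the factorization $\sum_{n\geqslant 0} b(n)q^n = (-q;q)_\infty \sum_{n\geqslant 1} q^{2n}/(1-q^{2n})$ that your Lambert-series identity establishes. One small point of care in the $c(n)$ computation: the factor for the repeated part $j$ is $\frac{q^{2j}}{1-q^j}\cdot\frac{1}{1+q^j}=\frac{q^{2j}}{1-q^{2j}}$ after dividing the background product $(-q;q)_\infty$ by $(1+q^j)$, which is exactly the "simplify" step you allude to and which lands on the same generating function as $a(n)$.
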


For example, $a(5)=4$ because the four partitions in question are: 
$$4+1,\quad 3+2,\quad 2+2+1\quad \text{and} \quad 2+1+1+1.$$ 
We have already seen there are $9$ parts in the odd partitions of $5$ and $5$ parts in the distinct partitions of $5$ with the difference $b(5)=4$. On the other hand, we have $c(5)=4$ where the relevant partitions are: 
$$3+1+1,\quad 2+2+1,\quad 2+1+1+1\quad \text{and} \quad 1+1+1+1+1.$$

In this paper, inspired by Andrews's proof of Theorem \ref{T1},
we provide new properties for the difference between the number of parts in the odd partitions of $n$ and the number of parts in the distinct partitions of $n$ considering 
two factorizations for the generating function of $b(n)$.

This paper is organized as follows. In Section \ref{S2} we will show that the difference between the number of parts in the odd partitions of $n$ and the number of parts in the distinct partitions of $n$ satisfies Euler's recurrence relation \eqref{EPR} when $n$ is odd. In Section  \ref{S3} we will provide a decomposition of $b(n)$ in terms of the total number of parts in all the partitions of $n$.
A linear homogeneous inequality for the difference $b(n)$ are conjectured in Section \ref{S5} in analogy with the linear homogeneous inequality for Euler's partition function $p(n)$ provided by Andrews and Merca in \cite{Andrews12}.

\section{A pentagonal number recurrence for $b(n)$}
\label{S2}

In this section we consider $s(n)$ to be the difference between the number of parts in all the partitions of $n$ into odd number of distinct parts and the number of parts in all the partitions of $n$ into even number of distinct parts. For instance, considering the partitions of $5$ into distinct parts, we see that $$s(5)=1-2-2=-3.$$

In \cite{Andrews12}, Andrews and Merca 
defined $M_k(n)$ to be the number of partitions of $n$ in which $k$ is the least positive integer that is not a part and there are more parts $>k$ than there are parts $<k$. 
If $n=18$ and $k=3$ then we have $M_3(18)=3$ because the three partitions in question are: 
$$5+5+5+2+1,\quad 6+5+4+2+1,\quad \text{and} \quad 7+4+4+2+1.$$

We have the following result.

\begin{theorem}\label{TH2}
	Let $k$ and $n$ be positive integers. The partition functions $b(n)$, $s(n)$ and $M_k(n)$ are related by
	\begin{align*}
	& (-1)^{k-1} \left( \sum_{j=-(k-1)}^k (-1)^j b\big(n-j(3j-1)/2\big) -\frac{1+(-1)^n}{2} s\left( \frac{n}{2}\right) \right) \\
	&  = \sum_{j=1}^{\lfloor n/2 \rfloor} s(j) M_k(n-j).
	\end{align*}	
\end{theorem}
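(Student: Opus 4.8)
The plan is to work entirely with generating functions, deriving a functional identity whose coefficient extraction yields the stated recurrence. The starting point should be the generating function for $b(n)$. Since $b(n)$ is the difference between the number of parts in odd partitions and the number of parts in distinct partitions, I would write
$$
\sum_{n=1}^\infty b(n)q^n = \frac{1}{(q;q^2)_\infty}\sum_{k=1}^\infty \frac{q^{2k-1}}{1-q^{2k-1}} - (-q;q)_\infty \sum_{k=1}^\infty \frac{q^k}{1+q^k},
$$
using the standard device that differentiating a product-form generating function (or inserting a counting variable $z$ marking parts and differentiating at $z=1$) produces the "number of parts" statistic. By Euler's theorem the two products $1/(q;q^2)_\infty$ and $(-q;q)_\infty$ coincide, so $b(n)$ has generating function equal to this common product times the difference of the two logarithmic-derivative sums.

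**Next I would** identify the role of $s(n)$. The quantity $s(n)$ is an alternating count of parts over distinct partitions, so its natural generating function comes from differentiating the finite product $\prod_{k\ge 1}(1-zq^k)$ with respect to $z$ at $z=1$; this connects $s(n)$ to a truncation/derivative of Euler's pentagonal product $(q;q)_\infty$. The key structural fact to establish is a clean factorization of the generating function of $b(n)$ that exhibits $(q;q)_\infty$ (or $(q^2;q^2)_\infty$) explicitly, because multiplying by $(q;q)_\infty$ is exactly what converts Euler's recurrence \eqref{EPR} into the alternating pentagonal sum $\sum_j(-1)^j b\big(n-j(3j-1)/2\big)$ appearing on the left. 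I expect the identity to take the shape
$$
(q;q)_\infty \sum_{n=1}^\infty b(n)q^n = (\text{a correction term involving } s) + (\text{a tail governed by } M_k),
$$
and the term $\tfrac{1+(-1)^n}{2}s(n/2)$ is the contribution that survives only at even $n$, reflecting a $q^2$-series factor.

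**The main engine** for the $M_k$ side should be the Andrews--Merca truncated-pentagonal machinery from \cite{Andrews12}. Their work expresses the finite (truncated) alternating sum $\sum_{j=-(k-1)}^{k}(-1)^j(\cdots)$ against $(q;q)_\infty^{-1}$ as $(-1)^{k-1}$ times a series with combinatorial interpretation $M_k$. I would therefore recall the truncated form of Euler's pentagonal number theorem,
$$
\frac{(-1)^{k-1}}{(q;q)_\infty}\sum_{j=0}^{k-1}(-1)^j q^{j(3j+1)/2}\big(1-q^{2j+1}\big)
= (-1)^{k-1} + \sum_{n=1}^\infty (-1)^{k-1}\frac{q^{\binom{k}{2}+kn}}{(q;q)_n}\begin{bmatrix}n-1\\k-1\end{bmatrix}q^{\,k}\!,
$$
whose coefficients are precisely $M_k(n)$, and multiply it by the $s$-generating function. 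The convolution $\sum_{j=1}^{\lfloor n/2\rfloor} s(j)M_k(n-j)$ on the right-hand side is then the Cauchy product of the $s$-series (supported on a $q^2$-grid, which is why the summation variable is $j\le\lfloor n/2\rfloor$) with the $M_k$-series.

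**The hard part will be** pinning down the exact factorization that simultaneously (i) isolates $(q;q)_\infty$ to generate the pentagonal recurrence, (ii) produces the $s(n/2)$ correction with the correct parity factor $\tfrac{1+(-1)^n}{2}$, and (iii) matches the $s\ast M_k$ convolution term-for-term including the sign $(-1)^{k-1}$. Concretely, I anticipate needing an intermediate identity of the form $(q;q)_\infty\sum b(n)q^n = \big(\text{something}\big)\cdot s\text{-series}$, after which the whole theorem reduces to applying the truncated pentagonal identity to the factor multiplying the $s$-series and reading off coefficients of $q^n$. Verifying that the derivative-of-product manipulations for $b$ and $s$ combine to leave exactly this single factor is the delicate bookkeeping; everything afterward is a Cauchy-product coefficient comparison.
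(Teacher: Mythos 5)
Your proposal is correct in outline and follows essentially the same route as the paper: the generating function of $b(n)$ reduces (via Euler's $(-q;q)_\infty=(q^2;q^2)_\infty/(q;q)_\infty$ and the logarithmic-differentiation/Lambert-series identity $\sum_{n\geqslant 1}s(n)q^n=(q;q)_\infty\sum_{n\geqslant 1}q^n/(1-q^n)$, applied with $q\mapsto q^2$) to the key factorization $(q;q)_\infty\sum_{n\geqslant 0}b(n)q^n=\sum_{n\geqslant 1}s(n)q^{2n}$, after which one multiplies the Andrews--Merca truncated pentagonal number theorem by the $s$-series and reads off coefficients by Cauchy multiplication, which is exactly the paper's argument. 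The one repair needed is your from-memory statement of the truncated theorem: the correct form is
$\dfrac{(-1)^{k-1}}{(q;q)_\infty}\sum_{j=-(k-1)}^{k}(-1)^jq^{j(3j-1)/2}=(-1)^{k-1}+\sum_{n\geqslant k}\dfrac{q^{\binom{k}{2}+(k+1)n}}{(q;q)_n}\begin{bmatrix}n-1\\k-1\end{bmatrix}$,
with exponent $\binom{k}{2}+(k+1)n$ rather than your $\binom{k}{2}+k(n+1)$, and with no extra factor $(-1)^{k-1}$ multiplying the $M_k$-series --- as literally written, your version would misplace the overall sign $(-1)^{k-1}$ in the final identity, so you must quote the Andrews--Merca form verbatim before carrying out the coefficient comparison.
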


\begin{proof}
	As we can see in \cite{Andrews17}, the proof of Theorem \ref{T1} invokes the equality of the generating functions for $a(n)$, $b(n)$ and $c(n)$. So we consider the following factorization of Andrews for the generating function of $b(n)$:
	\begin{equation}\label{eq:GEAF}
	\sum_{n=0}^{\infty} b(n) q^n = (-q;q)_\infty \sum_{n=1}^{\infty} \frac{q^{2n}}{1-q^{2n}}.
	\end{equation}	
	
	On the other hand, the identity
	\begin{equation*}
	\sum_{n=1}^\infty \frac{q^n}{1-q^n} = \frac{1}{(q;q)_\infty} \sum_{n=1}^{\infty} s(n) q^n.
	\end{equation*}
	is a specialization of the Lambert series factorization theorem \cite[Theorem 1.2]{Merca}.
	A proof of this relation via logarithmic differentiation can be seen in \cite[Theorem 1]{Merca16}.
	
	We have
	\begin{align}
	\sum_{n=0}^{\infty} b(n) q^n & = \frac{(q^2;q^2)_\infty}{(q;q)_\infty} \sum_{n=1}^{\infty} \frac{q^{2n}}{1-q^{2n}}\nonumber \\
	& = \frac{(q^2;q^2)_\infty}{(q;q)_\infty} \cdot \frac{1}{(q^2;q^2)_\infty} \sum_{n=1}^{\infty} s(n) q^{2n}\nonumber \\
	& = \frac{1}{(q;q)_\infty} \sum_{n=1}^{\infty} s(n) q^{2n}.\label{LS}
	\end{align}
	In \cite{Andrews12}, the authors considered Euler's pentagonal number theorem 
	and proved the following truncated form:
	\begin{equation} \label{TPNT}
	\frac{(-1)^{k-1}}{(q;q)_\infty} \sum_{n=-(k-1)}^{k} (-1)^{n} q^{n(3n-1)/2}= (-1)^{k-1}+ \sum_{n=k}^\infty \frac{q^{{k\choose 2}+(k+1)n}}{(q;q)_n}
	\begin{bmatrix}
	n-1\\k-1
	\end{bmatrix},
	\end{equation}
	where
	$$
	\begin{bmatrix}
	n\\k
	\end{bmatrix} 
	=
	\begin{cases}
	\dfrac{(q;q)_n}{(q;q)_k(q;q)_{n-k}}, &  \text{if $0\leqslant k\leqslant n$},\\
	0, &\text{otherwise}
	\end{cases}
	$$
	is the $q$-binomial coefficient.
	
	Multiplying both sides of \eqref{TPNT} by $\sum\limits_{n=1}^{\infty} s(n) q^{2n}$,
	we obtain
	\begin{align*}
	& (-1)^{k-1} \left( \bigg( \sum_{n=1}^\infty b(n) q^n \bigg) \bigg(  \sum_{n=-(k-1)}^{k} (-1)^{n} q^{n(3n-1)/2}\bigg) -\sum_{n=1}^\infty s(n)q^{2n}\right)   \\
	& =   \left( \sum_{n=1}^\infty s(n) q^{2n} \right) \left( \sum_{n=0}^\infty M_k(n) q^n\right),
	\end{align*}
	where we have invoked 
	the generating function for $M_k(n)$ \cite{Andrews12},
	$$
	\sum_{n=0}^\infty M_k(n) q^n = \sum_{n=k}^\infty \frac{q^{{k\choose 2}+(k+1)n}}{(q;q)_n}
	\begin{bmatrix}
	n-1\\k-1
	\end{bmatrix}.
	$$
	The proof follows easily considering Cauchy's multiplication of two power series.
\end{proof}

The limiting case $k\to\infty$ of Theorem \ref{TH2} provides the following linear recurrence relation for $b(n)$ involving the generalized pentagonal numbers.

\begin{corollary}\label{C3}
	For $n\geqslant 0$, 
	$$
	\sum_{k=-\infty}^{\infty} (-1)^k b\big(n-k(3k-1)/2\big) =
	\begin{cases}
	s(n/2),& \text{for $n$ even,}\\
	0,& \text{for $n$ odd.}
	\end{cases}
	$$
\end{corollary}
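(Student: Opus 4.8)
The plan is to deduce Corollary \ref{C3} as the limiting case $k\to\infty$ of Theorem \ref{TH2}. Fix $n$ and rewrite the identity of Theorem \ref{TH2} by dividing both sides through by $(-1)^{k-1}$, so that it reads
$\sum_{j=-(k-1)}^{k}(-1)^j b\big(n-j(3j-1)/2\big)-\frac{1+(-1)^n}{2}s(n/2)=(-1)^{k-1}\sum_{j=1}^{\lfloor n/2\rfloor}s(j)\,M_k(n-j)$.
I would then let $k$ grow and argue that each side converges, with the right-hand side vanishing identically for large $k$.

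First, the left-hand side. Since the generalized pentagonal numbers $j(3j-1)/2$ grow quadratically in $j$ and $b(m)=0$ for $m<0$, only finitely many terms of the bi-infinite sum $\sum_{j=-\infty}^{\infty}(-1)^j b\big(n-j(3j-1)/2\big)$ are nonzero. Hence, for every $k$ with $k(3k-1)/2>n$, the truncated sum $\sum_{j=-(k-1)}^{k}$ already captures all of these nonzero terms, so the left-hand side is independent of $k$ once $k$ is large enough and equals $\sum_{j=-\infty}^{\infty}(-1)^j b\big(n-j(3j-1)/2\big)-\frac{1+(-1)^n}{2}s(n/2)$.

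Second, and this is the crux, I would show that the right-hand side is zero for all large $k$ by locating the lowest power of $q$ in the generating function $\sum_{m\geqslant 0}M_k(m)q^m=\sum_{m\geqslant k}\frac{q^{{k\choose 2}+(k+1)m}}{(q;q)_m}\begin{bmatrix}m-1\\k-1\end{bmatrix}$. The summand of least degree is the one with $m=k$, whose leading exponent is ${k\choose 2}+(k+1)k=k(3k+1)/2$; since each factor $1/(q;q)_m$ and each $q$-binomial coefficient contributes only non-negative powers of $q$, it follows that $M_k(m)=0$ whenever $m<k(3k+1)/2$. In the sum $\sum_{j=1}^{\lfloor n/2\rfloor}s(j)\,M_k(n-j)$ every argument satisfies $n-j\leqslant n-1<n$, so as soon as $k(3k+1)/2>n$ each factor $M_k(n-j)$ vanishes and the entire right-hand side is identically $0$.

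Combining the two steps, for all sufficiently large $k$ the right-hand side is $0$ while the left-hand side has stabilized, which forces $\sum_{j=-\infty}^{\infty}(-1)^j b\big(n-j(3j-1)/2\big)=\frac{1+(-1)^n}{2}s(n/2)$. Evaluating $\frac{1+(-1)^n}{2}$ to be $1$ for $n$ even and $0$ for $n$ odd yields exactly the two cases of the claimed recurrence. I expect the main obstacle to be the second step, namely verifying rigorously that the minimal exponent of the $M_k$ generating function is $k(3k+1)/2$ and that it exceeds $n$ for large $k$; the passage to the limit on the left is then routine, being justified by the quadratic growth of the pentagonal numbers.
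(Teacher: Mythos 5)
Your proof is correct and takes exactly the paper's approach: the paper obtains Corollary \ref{C3} precisely as the limiting case $k\to\infty$ of Theorem \ref{TH2}, which is what you carry out. Your two supporting observations---that the truncated pentagonal sum stabilizes once $k(3k-1)/2>n$, and that $M_k(m)=0$ for $m<k(3k+1)/2$ (read off from the minimal exponent ${k\choose 2}+(k+1)k$ of the generating function), so the right-hand side vanishes for large $k$---are exactly the details the paper leaves implicit.
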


Theorem \ref{TH2} can be seen as a truncated form of Corollary \ref{C3}.
Considering again the relation \eqref{LS}, we remark the following convolution identity.

\begin{corollary}\label{C3a}
	For $n\geqslant 0$, 
	$$
	b(n) = \sum_{j=0}^{\lfloor n/2 \rfloor} s(j) p(n-2j).
	$$
\end{corollary}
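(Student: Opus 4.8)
The plan is to read the identity off directly from the factorization \eqref{LS} established in the proof of Theorem \ref{TH2}, without introducing any new machinery. The single ingredient I need is Euler's classical generating function
$$\frac{1}{(q;q)_\infty} = \sum_{m=0}^\infty p(m)\,q^m,$$
which expresses the reciprocal of the infinite product appearing in \eqref{LS} as the ordinary generating function of the partition function $p(n)$.

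First I would substitute this expansion into \eqref{LS}, writing
$$\sum_{n=0}^\infty b(n)\,q^n = \left(\sum_{m=0}^\infty p(m)\,q^m\right)\left(\sum_{j=1}^\infty s(j)\,q^{2j}\right).$$
Then I would form the Cauchy product of the two power series on the right. The term $s(j)\,q^{2j}$ contributes to the coefficient of $q^n$ exactly when it is paired with $p(n-2j)\,q^{n-2j}$, which requires $1\leqslant j$ and $2j\leqslant n$, that is, $1\leqslant j\leqslant\lfloor n/2\rfloor$. Comparing the coefficients of $q^n$ on both sides then yields
$$b(n) = \sum_{j=1}^{\lfloor n/2\rfloor} s(j)\,p(n-2j).$$

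To recover the stated form, in which the summation begins at $j=0$, I would observe that the empty partition of $0$ has no parts, so that by definition $s(0)=0$; hence adjoining the term $s(0)\,p(n)=0$ changes nothing and permits the lower index to be written as $0$. I do not expect any genuine obstacle here: the entire argument is a single application of Cauchy's product rule, and the only point demanding a little care is the bookkeeping of the summation bounds, together with the convention $s(0)=0$ that reconciles my index range with the one in the statement.
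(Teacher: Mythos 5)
Your proposal is correct and is essentially the paper's own argument: the corollary is stated there as an immediate consequence of relation \eqref{LS}, obtained by expanding $1/(q;q)_\infty$ as $\sum_{m\geqslant 0} p(m)q^m$ and comparing coefficients via the Cauchy product, exactly as you do. Your remark that $s(0)=0$ (so the sum may start at $j=0$) is a harmless bookkeeping point consistent with the paper's generating function $\sum_{n=1}^\infty s(n)q^{2n}$.
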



\section{A decomposition of $b(n)$}
\label{S3}

Let us define $S(n)$ to be  the total number of parts in all the partitions of $n$. For example, we have 
$$S(5)=1+2+2+3+3+4+5=20.$$

Andrews and Merca \cite{Andrews18}
defined $MP_k(n)$ to be the number of partitions of $n$ in which 
the first part larger than $2k-1$ 
is odd and appears exactly $k$ times. 
All other odd parts appear at most once.
For example, $MP_2(19) =10$, and the partitions in question are:
\begin{align*}
& 9 +9 +1,\ 9 +5 +5,\ 8 +5 +5 +1,\ 7 +7 +3 +2,\ 7 +7 +2 +2 +1,\\
& 7 +5 +5 +2,\ 6 +5 +5 +3,\ 6 +5 +5 +2 +1,\ 5 +5 +3 +2 +2 +2,\\
& 5 +5 +2 +2 +2 +2 +1.
\end{align*}

We have the following result.

\begin{theorem}\label{TH3}
	Let $k$ and $n$ be positive integers. The partition functions $b(n)$, $S(n)$ and $MP_k(n)$ are related by
	\begin{align*}
	b(n)-\sum_{j=0}^{2k-1} S\big(n/2-j(j+1)/4 \big) = (-1)^k \sum_{j=0}^n (-1)^j b(n-j) MP_k(j),
	\end{align*}
	where $S(x)=0$ if $x$ is not a positive integer
\end{theorem}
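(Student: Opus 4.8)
The plan is to mirror the proof of Theorem \ref{TH2} step for step, but to replace the Lambert-series factorization \eqref{LS} of the generating function of $b(n)$ by a \emph{second} factorization, now expressed through $S(n)$, and to replace the truncated pentagonal number theorem \eqref{TPNT} by its triangular-number analogue, namely the truncated Gauss identity of Andrews and Merca \cite{Andrews18} whose non-negative part is enumerated by $MP_k(n)$. Everything then collapses to a single Cauchy product, exactly as at the end of the proof of Theorem \ref{TH2}.

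First I would record the classical generating function for the total number of parts,
$$\sum_{n=1}^\infty S(n) q^n = \frac{1}{(q;q)_\infty}\sum_{n=1}^\infty \frac{q^n}{1-q^n},$$
obtained by differentiating $\prod_{m\geqslant 1}(1-zq^m)^{-1}$ in $z$ at $z=1$. Replacing $q$ by $q^2$ gives $\sum_{n\geqslant 1}\frac{q^{2n}}{1-q^{2n}}=(q^2;q^2)_\infty\sum_{n\geqslant 1}S(n)q^{2n}$, so Andrews's factorization \eqref{eq:GEAF} becomes
$$\sum_{n=0}^\infty b(n) q^n = (-q;q)_\infty (q^2;q^2)_\infty \sum_{n=1}^\infty S(n) q^{2n}.$$
The key observation is Gauss's identity $(-q;q)_\infty (q^2;q^2)_\infty = \sum_{n=0}^\infty q^{n(n+1)/2}$ (equivalently $\tfrac{(q^2;q^2)_\infty}{(q;q^2)_\infty}=\sum_{n\geqslant 0}q^{n(n+1)/2}$ combined with Euler's $(-q;q)_\infty=1/(q;q^2)_\infty$), which turns the last line into
$$\sum_{n=0}^\infty b(n) q^n = \left(\sum_{n=0}^\infty q^{n(n+1)/2}\right)\left(\sum_{n=1}^\infty S(n) q^{2n}\right).$$
This is the precise analogue of \eqref{LS}; comparing coefficients already yields the full convolution $b(n)=\sum_{j\geqslant 0}S(n/2-j(j+1)/4)$, so the theorem's left-hand side is just the tail $\sum_{j\geqslant 2k}S(n/2-j(j+1)/4)$.

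Next I would invoke the truncated Gauss identity from \cite{Andrews18} — the triangular-number counterpart of \eqref{TPNT} — in the form
$$\sum_{j=2k}^\infty q^{j(j+1)/2} = (-1)^k \left(\sum_{n=0}^\infty q^{n(n+1)/2}\right)\sum_{n=0}^\infty (-1)^n MP_k(n) q^n,$$
whose right-hand side encodes the generating function for $MP_k(n)$. Multiplying both sides by $\sum_{n\geqslant 1}S(n)q^{2n}$, using the factorization above on the right, and splitting the theta series on the left into its head $\sum_{j=0}^{2k-1}q^{j(j+1)/2}$ and its tail, the identity becomes
$$\sum_{n=0}^\infty b(n)q^n - \left(\sum_{j=0}^{2k-1} q^{j(j+1)/2}\right)\left(\sum_{n=1}^\infty S(n) q^{2n}\right) = (-1)^k\left(\sum_{n=0}^\infty b(n)q^n\right)\left(\sum_{n=0}^\infty (-1)^n MP_k(n) q^n\right).$$
Extracting the coefficient of $q^n$ by Cauchy multiplication gives the theorem, the head contributing exactly $\sum_{j=0}^{2k-1}S(n/2-j(j+1)/4)$ under the convention $S(x)=0$ for non-integral or non-positive $x$.

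The Cauchy-product bookkeeping is routine and identical in spirit to the end of the proof of Theorem \ref{TH2}. The main obstacle is twofold: recognizing that the product $(-q;q)_\infty(q^2;q^2)_\infty$ collapses to the triangular theta series (this is exactly what makes the arguments $j(j+1)/4$ appear), and pinning down the precise sign and truncation index in the Andrews--Merca truncated Gauss identity so that the global factor $(-1)^k$ and the upper limit $2k-1$ emerge correctly. Once that identity is taken in the displayed form, the remainder is formal manipulation of power series.
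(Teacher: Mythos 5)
Your proposal is correct and follows essentially the same route as the paper: both rest on Andrews's factorization \eqref{eq:GEAF}, the generating function $\sum_{n\geqslant 0}S(n)q^n=\frac{1}{(q;q)_\infty}\sum_{n\geqslant 1}\frac{q^n}{1-q^n}$, the Andrews--Merca truncated Gauss identity whose correction term is the generating function of $MP_k(n)$, and a final Cauchy multiplication. The only differences are organizational --- you collapse $(-q;q)_\infty(q^2;q^2)_\infty$ into the triangular theta series at the outset and multiply the tail form $\sum_{j\geqslant 2k}q^{j(j+1)/2}=(-1)^k\bigl(\sum_{n\geqslant 0}q^{n(n+1)/2}\bigr)\sum_{n\geqslant 0}(-1)^nMP_k(n)q^n$ by $\sum_{n\geqslant 1}S(n)q^{2n}$, whereas the paper keeps the product ratio $\frac{(q;q^2)_\infty}{(q^2;q^2)_\infty}$ and multiplies its identity \eqref{eq4} by $\sum_{n\geqslant 0}b(n)q^n$ --- and these are equivalent rearrangements of the same argument.
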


\begin{proof}
	First we want the generating function for
	partitions where $z$ keeps track of the number of parts equal to $k$. This is
	\begin{align*}
	\frac{1}{1-zq^k} \prod_{\substack{n=1\\n\neq k}}^\infty \frac{1}{1-q^n}
	= \frac{1}{(q;q)_\infty} \cdot \frac{1-q^k}{1-zq^k}.
	\end{align*} 
	Let $S_k(n)$ denote the total number of $k$'s in all the partitions of $n$. Hence
	\begin{align*}
	\sum_{n=0}^\infty S_k(n) q^n =  \frac{d}{dz}\Bigr|_{z=1} \frac{(1-q^k)}{(q;q)_\infty (1-zq^k)}=
	\frac{q^k}{1-q^k}\cdot \frac{1}{(q;q)_\infty}.
	\end{align*} 
	Thus, we deduce the following generating function for $S(n)$:
	$$\sum_{n=0}^\infty S(n) q^{n} = \frac{1}{(q;q)_\infty} \sum_{n=1}^\infty \frac{q^n}{1-q^n}.$$
	So we can write
	\begin{align*}
	\sum_{n=0}^{\infty} b(n) q^n & = \frac{(q^2;q^2)_\infty}{(q;q)_\infty} \sum_{n=1}^{\infty} \frac{q^{2n}}{1-q^{2n}}\\
	& = \frac{(q^2;q^2)_\infty}{(q;q^2)_\infty (q^2;q^2)_\infty} \sum_{n=1}^{\infty} \frac{q^{2n}}{1-q^{2n}}\\
	& = \frac{(q^2;q^2)_\infty}{(q;q^2)_\infty} \sum_{n=0}^\infty S(n) q^{2n}.
	\end{align*}
	This identity can be written as follows:
	\begin{align}
	& \frac{(q;q^2)_\infty}{(q^2;q^2)_\infty}
	\sum_{n=0}^{\infty} b(n) q^n = \sum_{n=0}^{\infty} S(n) q^{2n}. \label{LS1}
	\end{align}
	
	In \cite{Andrews18}, the authors considered the following theta identity of Gauss
	\begin{equation}\label{Gauss}
	\sum_{n=0}^{\infty} (-q)^{n(n+1)/2} = \frac {(q^2;q^2)_\infty} {(-q;q^2)_\infty}
	\end{equation}
	and proved the following truncated form:
	\begin{align*}
	& \frac{(-q;q^2)_{\infty}} {(q^2;q^2)_{\infty}} \sum_{j=0}^{2k-1} (-q)^{j(j+1)/2}  \\
	& \qquad = 1+(-1)^{k-1} \frac{(-q;q^2)_k}{(q^2;q^2)_{k-1}} \sum_{j=0}^{\infty} \frac{q^{k(2k+2j+1)}(-q^{2k+2j+3};q^2)_{\infty}}{(q^{2k+2j+2};q^2)_{\infty}}.
	\end{align*}
	By this relation, with $q$ replaced by $-q$, we obtain
	\begin{align}
& \frac{(q;q^2)_{\infty}} {(q^2;q^2)_{\infty}} \sum_{j=0}^{2k-1} q^{j(j+1)/2} 
 = 1+(-1)^{k-1} \sum_{n=0}^\infty (-1)^n MP_k(n)q^n,\label{eq4} 
\end{align}	
	where we have invoked 
the generating function for $MP_k(n)$ \cite{Andrews18},
$$
\sum_{n=0}^\infty MP_k(n) q^n = 
\frac{(-q;q^2)_k}{(q^2;q^2)_{k-1}} \sum_{j=0}^{\infty} \frac{q^{k(2k+2j+1)}(-q^{2k+2j+3};q^2)_{\infty}}{(q^{2k+2j+2};q^2)_{\infty}}.
$$
	
	Multiplying both sides of \eqref{eq4} by $\sum\limits_{n=0}^\infty b(n) q^{n}$,
	we obtain
	\begin{align*}
	& (-1)^{k-1} \left( \bigg( \sum_{n=1}^\infty S(n) q^{2n} \bigg) \bigg( \sum_{n=0}^{2k-1} q^{n(n+1)/2}\bigg) -\sum_{n=1}^\infty b(n)q^{n}\right)   \\
	& =   \left( \sum_{n=1}^\infty b(n) q^{n} \right) \left( \sum_{n=0}^\infty (-1)^n MP_k(n) q^n\right).
	\end{align*}

	The proof follows easily considering Cauchy's multiplication of two power series.
	
\end{proof}


The limiting case $k\to\infty$ of Theorem \ref{TH3} provides the following decomposition of the difference $b(n)$ in terms of $S(n)$.

\begin{corollary}\label{C4}
	For $n\geqslant 0$,
	\begin{equation*}
b(n) = \sum_{k=0}^{\infty} S\big(n/2-k(k+1)/4 \big),	
\end{equation*}	
	with $S(x)=0$ if $x$ is not a positive integer.
\end{corollary}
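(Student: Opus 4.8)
The plan is to obtain the identity as the stabilized limit of Theorem \ref{TH3} as $k\to\infty$, so that the work reduces to showing that both sides of that theorem settle to the claimed expressions once $k$ is large relative to the fixed integer $n$.

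First I would record the decay of the two $k$-dependent quantities. On the left-hand side, the sum $\sum_{j=0}^{2k-1} S\big(n/2 - j(j+1)/4\big)$ is controlled by the convention $S(x)=0$ unless $x$ is a positive integer: since $n/2 - j(j+1)/4 < 0$ whenever $j(j+1) > 2n$, only finitely many summands can be nonzero, and for every $k$ with $2k-1$ past that threshold the finite sum already coincides with $\sum_{j=0}^{\infty} S\big(n/2 - j(j+1)/4\big)$. On the right-hand side, I would inspect the generating function for $MP_k(n)$ quoted in the proof of Theorem \ref{TH3}: the prefactor $(-q;q^2)_k/(q^2;q^2)_{k-1}$ has constant term $1$, and the inner sum begins with the $j=0$ term $q^{k(2k+1)}$, so the lowest power of $q$ appearing in $\sum_n MP_k(n)q^n$ is $q^{k(2k+1)}$. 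Hence $MP_k(j)=0$ for every $j < k(2k+1)$.

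With these two facts, fix $n$ and choose $k$ so large that $k(2k+1) > n$. Then $MP_k(j)=0$ for all $0 \leqslant j \leqslant n$, so the entire right-hand side $(-1)^k \sum_{j=0}^n (-1)^j b(n-j) MP_k(j)$ of Theorem \ref{TH3} vanishes, while the left-hand finite sum has already reached its limiting value $\sum_{j=0}^\infty S\big(n/2 - j(j+1)/4\big)$. Rearranging the resulting equality yields exactly $b(n) = \sum_{k=0}^\infty S\big(n/2 - k(k+1)/4\big)$, which is the assertion.

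As an independent check, and an alternative route that avoids the truncated identity entirely, I would derive the corollary directly from generating functions. Relation \eqref{LS1} rearranges to $\sum_n b(n) q^n = \big((q^2;q^2)_\infty/(q;q^2)_\infty\big) \sum_n S(n) q^{2n}$, and Gauss's identity \eqref{Gauss} with $q$ replaced by $-q$ gives $\sum_{m\geqslant 0} q^{m(m+1)/2} = (q^2;q^2)_\infty/(q;q^2)_\infty$. Multiplying the two series and reading off the coefficient of $q^n$ (via Cauchy's product, pairing $m(m+1)/2$ with $2n'=n-m(m+1)/2$) reproduces the claimed sum. The one point requiring genuine care is the degree bound $MP_k(j)=0$ for $j < k(2k+1)$, on which the vanishing of the right-hand side in the limit rests entirely; the remainder is bookkeeping with the convention $S(x)=0$ and the triangular-number exponents.
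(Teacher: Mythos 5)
Your proof is correct and takes essentially the same route as the paper: the paper simply asserts that Corollary \ref{C4} is the limiting case $k\to\infty$ of Theorem \ref{TH3}, and your argument supplies precisely the justification for that limit — the degree bound $MP_k(j)=0$ for $j<k(2k+1)$, which kills the right-hand side once $k(2k+1)>n$, together with the observation that the same condition makes every omitted term $S\big(n/2-j(j+1)/4\big)$, $j\geqslant 2k$, vanish, so the truncated sum has stabilized. Your alternative derivation from \eqref{LS1} and Gauss's identity \eqref{Gauss} (with $q\mapsto -q$) is also valid and is the same mechanism the paper itself invokes in its concluding remarks for the analogous triangular-number recurrences.
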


More explicitly, this corollary can be rewritten as:
$$
b(2n)=\sum_{k=-\infty}^\infty S\big(n-k(4k-1)\big)
$$
and
$$
b(2n+1)=\sum_{k=-\infty}^\infty S\big(n-k(4k-3)\big).
$$
Combinatorial proofs of these identities would be very interesting. 
On the other hand, the relation \eqref{LS1} allows us to remark that
$$S(n)=\sum_{k=0}^{2n} (-1)^k e(k) b(2n-k),$$
where $e(n)$ is the number of partitions of $n$ in which each even part occurs with even multiplicity and there is no restriction on the odd parts \cite[A006950]{Sloane}.
Other properties for $S(n)$ can be found in \cite{Knopfmacher}.

As a consequence of Theorem \ref{TH3}, we remark the following infinite families of inequalities involving the partition functions $b(n)$ and $MP_k(n)$.

\begin{corollary}
	Let $k$ and $n$ be positive integers. Then
	\begin{align*}
 (-1)^k \sum_{j=0}^n (-1)^j b(n-j) MP_k(j) \geqslant 0.
	\end{align*}
\end{corollary}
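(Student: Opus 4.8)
The plan is to read the inequality directly off the two results already established, since the quantity to be bounded is, by Theorem~\ref{TH3}, nothing other than the right-hand side of that identity. Concretely,
$$(-1)^k \sum_{j=0}^n (-1)^j b(n-j) MP_k(j) = b(n) - \sum_{j=0}^{2k-1} S\big(n/2 - j(j+1)/4\big),$$
so it suffices to prove that the expression on the right is non-negative. In other words, the whole corollary reduces to the assertion that the finite sum $\sum_{j=0}^{2k-1} S\big(n/2 - j(j+1)/4\big)$ never exceeds $b(n)$.

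To see this, I would invoke Corollary~\ref{C4}, which expresses $b(n)$ as the \emph{full} series
$$b(n) = \sum_{m=0}^{\infty} S\big(n/2 - m(m+1)/4\big),$$
where I have renamed the summation index to $m$ to avoid a clash with the fixed parameter $k$ in the present statement. The finite sum appearing in Theorem~\ref{TH3} is precisely the head of this series, namely the $2k$ terms with $0 \leqslant m \leqslant 2k-1$. Subtracting it therefore leaves exactly the tail, so that
$$b(n) - \sum_{j=0}^{2k-1} S\big(n/2 - j(j+1)/4\big) = \sum_{m=2k}^{\infty} S\big(n/2 - m(m+1)/4\big).$$

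The conclusion then follows because every term of this tail is non-negative: $S(x)$ counts the total number of parts in all partitions of $x$ when $x$ is a positive integer, and $S(x)=0$ by convention otherwise, so $S(x)\geqslant 0$ in every case, and a sum of non-negative quantities is non-negative. I do not expect a genuine obstacle here: the arithmetic content is entirely carried by the truncated Gauss identity underlying Theorem~\ref{TH3}, which ensures that cutting the series for $b(n)$ off after an even number $2k$ of summands removes a non-negative remainder. The only point deserving a line of care is verifying that the finite sum in Theorem~\ref{TH3} really coincides with the first $2k$ summands of Corollary~\ref{C4}, which is immediate once both are matched by the common triangular-number offsets $m(m+1)/4$.
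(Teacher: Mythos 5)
Your proposal is correct and matches the paper's own argument in substance: the paper likewise combines Theorem~\ref{TH3} with Corollary~\ref{C4}, observing that $b(n)-\sum_{j=0}^{2k-1} S\big(n/2-j(j+1)/4\big)$ decreases as $k$ grows (each step subtracts additional non-negative values of $S$) and tends to $0$, which is just another phrasing of your statement that the difference equals the non-negative tail $\sum_{m=2k}^{\infty} S\big(n/2-m(m+1)/4\big)$. No gap; your version is, if anything, slightly more explicit about identifying the difference with the tail of the series.
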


\begin{proof}
	We take into account that
	\begin{align*}
	 b(n)-\sum_{j=0}^{1} S\big(n/2-j(j+1)/4 \big) 
	& \geqslant b(n)-\sum_{j=0}^{3} S\big(n/2-j(j+1)/4 \big)\\
	& \geqslant \cdots \geqslant b(n)-\sum_{j=0}^{\infty} S\big(n/2-j(j+1)/4 \big) = 0.
	\end{align*}
\end{proof}

Relevant to Theorem \ref{TH3}, it would be very appealing to have combinatorial
interpretations of
$$(-1)^k \sum_{j=0}^n (-1)^j b(n-j) MP_k(j).$$

\section{Open problems}
\label{S5}

Linear homogeneous inequalities involving Euler's partition function $p(n)$ have been the subject
of recent studies \cite{Andrews12,Andrews18,Merca12,Merca19}. In \cite{Merca12}, the author proved the inequality
\begin{equation*}\label{eq1}
p(n)-p(n-1)-p(n-2)+p(n-5)\leqslant 0,\qquad n>0,
\end{equation*}
in order to provide the fastest known algorithm for the generation of the partitions of $n$.
Subsequently, Andrews and Merca \cite{Andrews12} proved more generally that, for $k>0$,
\begin{equation}\label{ineq2}
(-1)^{k-1} \sum_{j=-(k-1)}^{k} (-1)^{j} p\big(n-j(3j-1)/2\big) \geqslant 0,
\end{equation} 
with strict inequality if $n\geqslant k(3k+1)/2$. In other words, for $k>0$, the coefficients of $q^n$ in the series
$$
(-1)^{k-1} \left( \frac{1}{(q;q)_\infty} \sum_{j=-(k-1)}^{k} (-1)^{j} q^{j(3j-1)/2}-1\right) 
$$
are all zero for $0\leqslant n < k(3k+1)/2$, and for  $n\geqslant k(3k+1)/2$ all the coefficients are positive.
Related to this result on truncated pentagonal number series, we remark that
there is a substantial amount of numerical evidence to conjecture a stronger result.

\begin{conjecture}\label{C41}
	For $k>0$, the coefficients of $q^n$ in the series
	$$
	(-1)^{k-1}  \left( \frac{1}{(q;q)_\infty} \sum_{j=-(k-1)}^{k} (-1)^{j} q^{j(3j-1)/2}-1\right) (q^2;q^2)_\infty
	$$
	are all zero for $0\leqslant n < k(3k+1)/2$, and for  $n\geqslant k(3k+1)/2$ all the coefficients are positive. 
\end{conjecture}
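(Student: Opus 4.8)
The plan is to first recognize the series of Conjecture \ref{C41} as a single, structurally transparent object. By the truncated pentagonal number theorem \eqref{TPNT}, the factor $(-1)^{k-1}\bigl(\frac{1}{(q;q)_\infty}\sum_{j=-(k-1)}^{k}(-1)^j q^{j(3j-1)/2}-1\bigr)$ is exactly $\sum_{n\ge0}M_k(n)q^n$, so the series is $(q^2;q^2)_\infty\sum_{n\ge0}M_k(n)q^n$, i.e.\ precisely the series of the abstract conjecture. Using $(q^2;q^2)_\infty=(q;q)_\infty(-q;q)_\infty$ together with Euler's pentagonal number theorem, I would rewrite it as
\[
(-1)^{k-1}\left((-q;q)_\infty\sum_{j=-(k-1)}^{k}(-1)^j q^{j(3j-1)/2}-(q^2;q^2)_\infty\right),
\]
which displays it as a \emph{truncated form of the identity} $(q;q)_\infty(-q;q)_\infty=(q^2;q^2)_\infty$, the pentagonal expansion of $(q;q)_\infty$ being cut to $-(k-1)\le j\le k$. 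The easy half of the statement then follows at once: $\sum_n M_k(n)q^n$ has lowest term $q^{k(3k+1)/2}$ with coefficient $1$ (the $n=k$ summand of the abstract series), while $(q^2;q^2)_\infty=1-q^2-\cdots$ begins with $1$, so the product vanishes below $q^{k(3k+1)/2}$ and its first nonzero coefficient is $1>0$.

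The substance is strict positivity of every coefficient from $q^{k(3k+1)/2}$ on. Since $[q^n](q^2;q^2)_\infty=(-1)^j$ when $n=j(3j-1)$ and $0$ otherwise, the coefficient of $q^N$ equals
\[
c_k(N)=\sum_{j=-\infty}^{\infty}(-1)^j M_k\bigl(N-j(3j-1)\bigr),
\]
and the goal is $c_k(N)>0$ for $N\ge k(3k+1)/2$. My main line of attack would be a sign-reversing involution carrying out Franklin's proof of the pentagonal number theorem on the extra factor. Concretely, interpret $(q^2;q^2)_\infty$ by Franklin's involution as the signed enumeration $\sum_{\lambda}(-1)^{\#(\lambda)}$ over partitions $\lambda$ into distinct even parts, so that $c_k(N)$ is a signed count over pairs $(\mu,\lambda)$ with $|\mu|+|\lambda|=N$, $\mu$ counted by $M_k$ and $\lambda$ into distinct even parts. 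I would then build an involution on these pairs that reverses the parity of $\#(\lambda)$ — moving mass between the Franklin base/diagonal of $\lambda$ and the extremal parts of $\mu$ — while respecting the truncation $-(k-1)\le j\le k$ in the manner Andrews and Merca control truncation in their proof of \eqref{ineq2}. The surviving fixed points should then manifestly enumerate a nonempty set of partitions for every $N\ge k(3k+1)/2$, giving $c_k(N)>0$.

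Designing this involution is exactly where I expect the real obstacle to lie. The difficulty is structural: because $(q^2;q^2)_\infty$ carries negative coefficients, positivity is \emph{not} inherited from $M_k(n)\ge0$, so the fine structure of $M_k$ (least non-part $k$, with more parts $>k$ than $<k$) must be used in an essential way. The map has to simultaneously reverse the sign carried by $\lambda$, preserve membership in the class $M_k$ (or trade it against $\lambda$ in a controlled fashion), and interact correctly with the truncation bound; making the fixed-point set both explicit and provably nonempty \emph{uniformly in $k$} is the crux of the whole problem.

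Should a clean involution prove elusive, I would fall back on two routes. The first is inductive, in analogy with \eqref{ineq2}: examine the difference between the $k$- and $(k-1)$-truncations, which multiplies $(-q;q)_\infty$ by just the two pentagonal terms gained at each stage, seek a finite recurrence for $c_k(N)$ in $k$ and $N$, and induct; the obstacle is that consecutive two-term pieces alternate in sign, so summing them does not obviously preserve positivity. The second is analytic: in the equivalent form obtained from the factorization above, $c_k(N)=\sum_{\ell\ge0}(-1)^\ell Q\bigl(N-P_\ell\bigr)$, where $Q(n)=[q^n](-q;q)_\infty$ counts partitions into distinct parts and $P_0=k(3k+1)/2<P_1<\cdots$ are the tail pentagonal numbers. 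Since $Q(n)$ grows like $\exp(\pi\sqrt{n/3})$ while the $P_\ell$ are spaced quadratically, the leading term $Q\bigl(N-k(3k+1)/2\bigr)$ dominates the remainder of the alternating tail once $N$ is large, giving $c_k(N)>0$ beyond an effective threshold, after which only finitely many $N$ remain for each $k$. Converting ``$N$ large'' into the exact bound $N\ge k(3k+1)/2$ uniformly in $k$ is the residual difficulty on this route.
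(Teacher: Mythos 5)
First, a point of calibration: the statement you are attempting is Conjecture~\ref{C41} of the paper. The paper offers no proof of it --- it is posed explicitly as an open problem supported by numerical evidence --- so there is no argument of the paper to compare yours against; the only question is whether your proposal closes the conjecture, and it does not. Your structural reductions are correct and worthwhile: by \eqref{TPNT} and the generating function for $M_k(n)$, the series is exactly $(q^2;q^2)_\infty \sum_{n\geqslant 0} M_k(n)q^n$ (the series in the abstract), and the factorization $(q^2;q^2)_\infty=(q;q)_\infty(-q;q)_\infty$ legitimately recasts it as a truncation of $(q;q)_\infty(-q;q)_\infty=(q^2;q^2)_\infty$. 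Your ``easy half'' is also sound: the lowest term of $\sum_n M_k(n)q^n$ is $q^{k(3k+1)/2}$ with coefficient $1$, so all coefficients below $k(3k+1)/2$ vanish and the coefficient at $n=k(3k+1)/2$ equals $1$. But positivity of \emph{every} coefficient from that point on is the entire content of the conjecture, and your proposal leaves it unproven: the sign-reversing involution is never constructed, only specified by the properties it would need to have, and you yourself identify (correctly) the obstruction --- the negative coefficients of $(q^2;q^2)_\infty$ prevent positivity from being inherited from $M_k(n)\geqslant 0$ --- without overcoming it.

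The fallback routes also fail as stated, and one contains a concrete error. In the analytic route you write $c_k(N)=\sum_{\ell\geqslant 0}(-1)^\ell Q(N-P_\ell)$ with $P_0=k(3k+1)/2<P_1<\cdots$ the tail pentagonal numbers. But the tail exponents come from $j=-k,\ k+1,\ -(k+1),\ k+2,\ -(k+2),\dots$, whose signs, after extracting $(-1)^{k}$, follow the paired pattern $+,-,-,+,+,-,-,\dots$, not the strict alternation $(-1)^\ell$. Hence $c_k(N)$ is \emph{not} an alternating series in $\ell$, and no alternating-series or single-leading-term argument applies directly: one would need $Q(N-P_0) > Q(N-P_1)+Q(N-P_2)-\cdots$ with consecutive negative terms, which is a genuinely harder inequality (compare the machinery needed for \eqref{ineq2}). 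Moreover, an asymptotic dominance argument, even if repaired, gives positivity only for $N$ beyond some threshold depending on $k$; converting that into the exact uniform bound $N\geqslant k(3k+1)/2$ is precisely what such an argument cannot do without substantial additional work. In sum, your proposal verifies the trivial part of the statement (the vanishing below the threshold and the first coefficient), and leaves the conjecture exactly as open as the paper does.
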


Let $Q(n)$ be the number of partitions of $n$ into odd parts. 
It is well known that the generating function for $Q(n)$ is $1/(q;q^2)_\infty$. 
Assuming Conjecture \ref{C41}, we immediately deduce that the partition functions $p(n)$ and $Q(n)$, share a common infinite family of linear inequalities of the form \eqref{ineq2} when $n$ is odd. 
In addition, considering Theorem \ref{TH2}, we easily deduce that the partition function $b(n)$ satisfies the following infinite families of linear inequalities.

\begin{conjecture}
	For $k>0$,
	$$
(-1)^{k-1} \left( \sum_{j=-(k-1)}^k (-1)^j b\big(n-j(3j-1)/2\big) -\frac{1+(-1)^n}{2} s\left( \frac{n}{2}\right) \right) \geqslant 0,
	$$
	with strict inequalities if  $n\geqslant2+k(3k+1)/2$.
\end{conjecture}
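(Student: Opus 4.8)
The plan is to avoid the sign-indefinite convolution $\sum_{j=1}^{\lfloor n/2\rfloor}s(j)M_k(n-j)$ that Theorem~\ref{TH2} equates with the left-hand side, and instead to factor the generating function of that left-hand side into a product of two series each with non-negative coefficients. Introduce the abbreviations $B(q)=\sum_{n\geqslant 0}b(n)q^n$, $P_k(q)=\sum_{j=-(k-1)}^{k}(-1)^jq^{j(3j-1)/2}$, $S_2(q)=\sum_{m\geqslant 1}s(m)q^{2m}$, and $T_k(q)=\sum_{n=k}^{\infty}\frac{q^{\binom{k}{2}+(k+1)n}}{(q;q)_n}\begin{bmatrix}n-1\\k-1\end{bmatrix}$. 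Since the shift $n\mapsto n-j(3j-1)/2$ corresponds to multiplication by $q^{j(3j-1)/2}$, and since $\tfrac12(1+(-1)^n)s(n/2)$ has generating function $S_2(q)$, the generating function of the quantity in the conjecture is $(-1)^{k-1}\bigl(B(q)P_k(q)-S_2(q)\bigr)$.

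First I would eliminate $S_2(q)$ using~\eqref{LS}, which states $B(q)=S_2(q)/(q;q)_\infty$; hence $S_2(q)=(q;q)_\infty B(q)$ and the generating function becomes $(-1)^{k-1}B(q)\bigl(P_k(q)-(q;q)_\infty\bigr)$. Multiplying the truncated pentagonal identity~\eqref{TPNT} through by $(-1)^{k-1}(q;q)_\infty$ yields $P_k(q)-(q;q)_\infty=(-1)^{k-1}(q;q)_\infty T_k(q)$, so the two powers of $-1$ cancel and the generating function collapses to $B(q)\,(q;q)_\infty\,T_k(q)$. Feeding in Andrews's factorization~\eqref{eq:GEAF}, namely $B(q)=(-q;q)_\infty\sum_{n\geqslant 1}q^{2n}/(1-q^{2n})$, together with $(-q;q)_\infty(q;q)_\infty=(q^2;q^2)_\infty$, gives the target identity
\begin{equation*}
(-1)^{k-1}\bigl(B(q)P_k(q)-S_2(q)\bigr)=\left(\sum_{n=1}^{\infty}\frac{q^{2n}}{1-q^{2n}}\right)\left((q^2;q^2)_\infty\,T_k(q)\right),
\end{equation*}
in which the bracketed factor is precisely the series of Conjecture~\ref{C41} (equivalently, the series displayed in the abstract).

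The inequality now follows from coefficient positivity of the two factors. The first expands as $\sum_{N\geqslant 1}d(N)q^{2N}$, where $d(N)\geqslant 1$ is the number of divisors of $N$, so its coefficients are positive at every positive even power and zero elsewhere. Granting Conjecture~\ref{C41}, the second factor $(q^2;q^2)_\infty T_k(q)=\sum_{m}c_mq^m$ satisfies $c_m=0$ for $m<k(3k+1)/2$ and $c_m>0$ for $m\geqslant k(3k+1)/2$. A Cauchy product of two non-negative power series is non-negative, which establishes the asserted inequality.

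For the strict part I would read off lowest degrees. The coefficient of $q^n$ in the product equals $\sum_{N\geqslant 1}d(N)\,c_{n-2N}$, a sum of non-negative terms; writing $M=k(3k+1)/2$, the term $N=1$ alone contributes $d(1)c_{n-2}=c_{n-2}$, which is positive exactly when $n\geqslant M+2$. Thus for $n\geqslant 2+k(3k+1)/2$ the coefficient is strictly positive, while for $n<2+k(3k+1)/2$ every argument obeys $n-2N\leqslant n-2<M$, forcing all $c_{n-2N}=0$ and hence a coefficient that is identically zero. This recovers the stated threshold and in fact sharpens it to an exact vanishing below it. The reduction itself is entirely formal; the genuine obstacle is that the positivity of the bracketed factor is exactly Conjecture~\ref{C41} (equivalently the abstract non-negativity statement), which is open, so what this argument really delivers is the clean implication that Conjecture~\ref{C41} forces the present conjecture.
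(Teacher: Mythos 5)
Your proposal is correct (necessarily conditional on Conjecture \ref{C41}, which is exactly the hypothesis the paper intends, since the statement is itself posed as a conjecture) and it takes essentially the same route as the paper: the identity you derive from \eqref{LS}, \eqref{TPNT} and \eqref{eq:GEAF} is precisely Theorem \ref{TH2} in generating-function form, with $S_2(q)=(q^2;q^2)_\infty\sum_{N\geqslant 1}d(N)q^{2N}$ (where $d(N)$ counts divisors of $N$) regrouped so that one factor is the nonnegative divisor series and the other is the series of Conjecture \ref{C41}. That regrouping is exactly the content of the paper's one-line deduction ``considering Theorem \ref{TH2}, we easily deduce,'' so your write-up simply makes that implicit step explicit, and additionally sharpens the statement by showing the coefficients vanish identically below the threshold $2+k(3k+1)/2$.
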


In this context, relevant to Theorem \ref{TH2}, it would be very appealing to have combinatorial
interpretations of
$$\sum_{j=1}^{\lfloor n/2 \rfloor} s(j) M_k(n-j).$$

\section{Concluding remarks}

New properties for the difference between the number of parts in the odd partitions of $n$ 
and the number of parts in the distinct partitions of $n$ have been introduced in this paper.

Surprisingly, when $n$ is odd, Euler's partition function $p(n)$ and the difference $b(n)$ share two common linear homogeneous recurrence relations. As we can see in Corollary \ref{C3}, the first recurrence relation involves the generalized pentagonal numbers:
\begin{align*}
p(n) &= p(n-1) + p(n-2) - p(n-5) -p(n-7) \\
& \qquad + p(n-12) + p(n-15) - p(n-22) - p(n-26) + \cdots,
\end{align*}
and
\begin{align*}
b(n) &= b(n-1) + b(n-2) - b(n-5) -b(n-7) \\
& \qquad + b(n-12) + b(n-15) - b(n-22) - b(n-26) + \cdots.
\end{align*}
The second recurrence relation combines the partition function $p(n)$ and the difference $b(n)$ with the triangular numbers, as follows:
\begin{align*}
p(n) &= p(n-1) + p(n-3) - p(n-6) -p(n-10) \\
& \qquad + p(n-15) + p(n-21) - p(n-28) - p(n-36) + \cdots,
\end{align*}
and
\begin{align*}
b(n) &= b(n-1) + b(n-3) - b(n-6) -b(n-10) \\
& \qquad + b(n-15) + b(n-21) - b(n-28) - b(n-36) + \cdots.
\end{align*} 
These relations can be easily derived considering again the theta identity of Gauss \eqref{Gauss} and the following two identities:
\begin{align*}
&\frac{(q^2;q^2)_\infty}{(-q;q^2)_\infty} \sum_{n=0}^\infty p(n) q^n = (-q^2;q^2)_\infty,
\end{align*}
and
\begin{align*}
&\frac{(q^2;q^2)_\infty}{(-q;q^2)_\infty} \sum_{n=0}^\infty b(n) q^n = (q^4;q^4)_\infty \sum_{n=1}^\infty \frac{q^{2n}}{1-q^{2n}}.
\end{align*}

Finally, we want to thank Professor George E. Andrews for his valuable comments on the first version of this paper.




\bigskip


\end{document}